\numberwithin{equation}{section}
\theoremstyle{plain}
\newtheorem{theorem}[equation]{Theorem}
\newtheorem{lemma}[equation]{Lemma}
\newtheorem{proposition}[equation]{Proposition}
\theoremstyle{definition}
\newtheorem{definition}[equation]{Definition}
\newtheorem{remark}[equation]{Remark}
\numberwithin{equation}{section}
\newcommand{\N}{{\mathbb N}}
\providecommand{\vint}[1]{\mathchoice
          {\mathop{\vrule width 5pt height 3 pt depth -2.5pt
                  \kern -9pt \kern 1pt\intop}\nolimits_{\kern -5pt{#1}}}
          {\mathop{\vrule width 5pt height 3 pt depth -2.6pt
                  \kern -6pt \intop}\nolimits_{\kern -3pt{#1}}}
          {\mathop{\vrule width 5pt height 3 pt depth -2.6pt
                  \kern -6pt \intop}\nolimits_{\kern -3pt{#1}}}
          {\mathop{\vrule width 5pt height 3 pt depth -2.6pt
                  \kern -6pt \intop}\nolimits_{\kern -3pt{#1}}}}
\newcommand{\eps}{\varepsilon}
\newcommand{\loc}{\mathrm{loc}}
\newcommand{\BV}{\mathrm{BV}}
\newcommand{\liploc}{\mathrm{Lip}_{\mathrm{loc}}}
\newcommand{\ch}{\text{\raise 1.3pt \hbox{$\chi$}\kern-0.2pt}}
\newcommand{\CC}{\mathcal{C}\mathcal{C}}
\DeclareMathOperator{\frm}{\mathfrak{m}}
\DeclareMathOperator{\dfrm}{d\mathfrak{m}}
\DeclareMathOperator{\Mod}{Mod}
\DeclareMathOperator{\sat}{sat}
\begin{document}
\title{A note on indecomposable sets of finite perimeter
\let\thefootnote\relax\footnotetext{{\bf Mathematics Subject Classification (2020)}: 30L99, 26B30, 46E36
\hfill \break {\it Keywords\,}: set of finite perimeter, indecomposable set,
metric measure space, isotropic space, Poincar\'e inequality, doubling measure
}}
\author{Panu Lahti}
\maketitle

\begin{abstract}
Bonicatto--Pasqualetto--Rajala \cite{BPR} proved that a decomposition theorem for sets of finite perimeter
into indecomposable sets, known to hold in Euclidean spaces, holds also in complete metric spaces equipped with
a doubling measure, supporting a Poincar\'e inequality, and satisfying an \emph{isotropicity}
condition. We show that the last assumption can be removed.
\end{abstract}

\section{Introduction}

A set of finite perimeter $E$ in a metric measure space $X$ is said to be \emph{indecomposable}
if it cannot be written as the disjoint union of two non-negligible sets $F,G$ with
$P(E,X)=P(F,X)+P(G,X)$. This measure-theoretic notion is similar to the topological notion of
connectedness. Properties of indecomposable sets in Euclidean spaces were studied by
Ambrosio et al. \cite{ACMM}; in particular, they proved that a set of finite perimeter can always
be uniquely decomposed into indecomposable sets. 

The theory was generalized to metric measure spaces by
Bonicatto--Pasqualetto--Rajala \cite{BPR}. As is common in analysis on metric measure spaces,
they assumed the space $(X,d,\frm)$ to be complete, equipped with a doubling measure, and support
a $(1,1)$-Poincar\'e inequality.
Such a space is called a PI space;
we will give definitions in Section \ref{sec:preliminaries}.
Additionally, they assumed that the representation of perimeter by means of the Hausdorff measure
satisfies an \emph{isotropicity} condition. This condition was previously
considered e.g. in \cite[Section 7]{AMP} and it is satisfied in Euclidean as well
as various other PI spaces. However, it excludes some PI spaces from the theory, see \cite[Example 1.27]{BPR}.

The proofs in \cite{BPR} relied heavily on the isotropicity condition,
and the condition was even shown to be necessary for certain results in the theory,
but its necessity for the main decomposition theorem remained unclear.
In the present paper, we show that the
isotropicity assumption can be removed.
On the other hand, the assumption of a $(1,1)$-Poincar\'e inequality cannot be removed,
see \cite[Example 2.16]{BPR}.

The following decomposition theorem is \cite[Theorem 2.14]{BPR},
except that there isotropicity was assumed.

\begin{theorem}\label{thm:main decomposition}
	Let $(X,d,\frm)$ be a PI space. Let $E\subset X$ be a set of finite perimeter. Then there exists a unique
	(finite or countable) partition $\{E_i\}_{i\in I}$ of $E$ into indecomposable subsets of $X$
	such that $\frm(E_i)>0$ for every $i\in I$ and $P(E,X)=\sum_{i\in I}P(E_i,X)$, where
	uniqueness is in the $\frm$-a.e. sense. Moreover, the sets $\{E_i\}_{i\in I}$
	are maximal indecomposable sets, meaning that for any Borel set $F\subset E$ with
	$P(F,X)<\infty$ that is indecomposable there is a (unique) $i\in I$
	such that $\frm(F\setminus E_i)=0$.
\end{theorem}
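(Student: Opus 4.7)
My plan is to follow the exhaustion strategy used in \cite{BPR} while replacing the isotropicity-based arguments by measure-theoretic arguments on the perimeter measure. The overall architecture of the proof is unchanged, and uniqueness together with the maximality statement are obtained at the end from the construction.

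I would introduce the family $\mathcal{P}(E)$ of countable essentially disjoint Borel partitions $\{F_i\}_{i\in I}$ of $E$ with $\frm(F_i)>0$ for every $i$ and $\sum_i P(F_i,X)=P(E,X)$, partially ordered by refinement. This family is non-empty because $\{E\}\in\mathcal{P}(E)$. Using the fact that for any essentially disjoint decomposition $F=F_1\sqcup F_2$ one has $P(F,\cdot)\le P(F_1,\cdot)+P(F_2,\cdot)$ as Borel measures---so that scalar additivity of perimeter on $X$ upgrades to additivity of the perimeter measures---one verifies that every chain in $\mathcal{P}(E)$ admits an upper bound and applies Zorn's lemma to obtain a maximal element $\{E_i\}_{i\in I}$. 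The countability of $I$ follows from the relative isoperimetric inequality in PI spaces together with $\sum_i P(E_i,X)\le P(E,X)<\infty$, which forces only finitely many $E_i$ to have $\frm$-measure above any fixed threshold.

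One then shows that each $E_i$ is indecomposable: if $E_i=F\sqcup G$ were a perimeter-additive essentially disjoint decomposition with $\frm(F),\frm(G)>0$, then replacing $E_i$ with $F$ and $G$ would produce a strictly finer element of $\mathcal{P}(E)$, contradicting maximality. The maximality claim in the statement of the theorem, and with it the $\frm$-a.e.\ uniqueness, would be obtained by taking any indecomposable Borel $F\subset E$ with $P(F,X)<\infty$, writing $F=\bigsqcup_i(F\cap E_i)$, and using indecomposability of $F$ to force all but one of the $\frm(F\cap E_i)$ to vanish.

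The main obstacle, and the place where isotropicity was invoked in \cite{BPR}, lies in verifying that a \emph{countable} essentially disjoint refinement of a member of $\mathcal{P}(E)$ is itself in $\mathcal{P}(E)$. Concretely, one must rule out that as one refines indefinitely the sum of perimeters could strictly decrease, and dually, that an arbitrary countable refinement can still be arranged so that $\sum_i P(F_i,X)=P(E,X)$. In \cite{BPR} this is handled via the Hausdorff-measure representation of perimeter under isotropicity, which makes the boundary bookkeeping transparent. Without isotropicity, my approach would be purely measure-theoretic: show that in any perimeter-additive essentially disjoint partition the measures $P(F_i,\cdot)$ are mutually singular and sum to $P(E,\cdot)$, and that this structure is preserved under countable refinements. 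The hard part will be establishing this mutual singularity and its stability directly from the PI-space axioms, without appealing to the boundary-multiplicity formalism that isotropicity makes so well-behaved.
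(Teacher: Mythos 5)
Your outline correctly reproduces the top-level architecture of the BPR proof (Zorn's lemma on perimeter-additive partitions ordered by refinement, countability from isoperimetry, indecomposability from maximality, uniqueness via the maximality claim), and your observation that scalar additivity $P(E,X)=P(F,X)+P(G,X)$ together with the sub-measure inequality $P(E,\cdot)\le P(F,\cdot)+P(G,\cdot)$ upgrades to additivity of Borel measures is correct. However, you explicitly leave the crucial step unproved: you write that ``the hard part will be establishing this mutual singularity and its stability directly from the PI-space axioms,'' which is exactly the content that a proof must supply, so the proposal is a plan rather than a proof.

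Beyond being incomplete, the proposed route is likely a dead end. Mutual singularity of the measures $P(F,\cdot)$ and $P(G,\cdot)$ is \emph{not} a consequence of perimeter additivity once isotropicity is dropped. Via the representation \eqref{eq:def of theta}, measure additivity only forces $\theta_E=\theta_F+\theta_G$ at $\mathcal H$-a.e.\ point of the common boundary $\partial^*F\cap\partial^*G\cap\partial^*E$; in an isotropic space this is contradictory (it would give $\theta_E=2\theta_E$ with $\theta_E\ge\alpha>0$), which is why BPR can conclude $\mathcal H(\partial^*F\cap\partial^*G)=0$, but without isotropicity nothing prevents $\theta_F,\theta_G<\theta_E$ from coexisting on a set of positive $\mathcal H$-measure. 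So mutual singularity is precisely the kind of conclusion one should expect to \emph{lose} when removing isotropicity, and an argument built on it cannot succeed. The paper avoids this entirely: the key replacement for \cite[Lemma 2.8]{BPR} is Proposition~\ref{prop:main decomposition result}, a purely scalar statement (if $P(F\cup G,D)=P(F,D)+P(G,D)$ with $F,G$ disjoint, then $P(F'\cup G',D)=P(F',D)+P(G',D)$ for any finite-perimeter $F'\subset F$, $G'\subset G$), proved not via the Hausdorff representation at all but by a careful construction of approximating sequences and the $\min$/$\max$ upper-gradient identities \eqref{eq:min max upper gradient}--\eqref{eq:max upper gradient}, together with Lemma~\ref{lem:almost optimal sequences}. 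This, plus Proposition~\ref{prop:perimeter in small large set} (restriction of additivity to smaller open sets, again by upper-gradient bookkeeping), is what lets the BPR exhaustion go through. You should replace the singularity strategy by something along these lines.
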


First in Section \ref{sec:main tools} we prove some
results on sets of finite perimeter
using only elementary methods, mostly involving basic properties of upper gradients in metric 
spaces. 
These results may be also of some independent interest.

Then in Section \ref{sec:decomposition} we show how the proofs in \cite{BPR} can be modified
to obtain the decomposition theorem (Theorem \ref{thm:main decomposition}) without the isotropicity assumption.
We also give a similar modification for some results of \cite{BPR} concerning \emph{holes}.
Other results of \cite{BPR}, especially certain ones concerning \emph{simple sets},
are shown there to be false unless one assumes isotropicity or even a stronger
\emph{two-sided property}. Thus we will not discuss these parts of the theory.

The presentation of Section \ref{sec:main tools} is essentially self-contained, whereas
Section \ref{sec:decomposition} largely consists of describing modifications to \cite{BPR}, so the
interested reader is advised to read that paper first.\\

\textbf{Acknowledgments.}
The author wishes to thank Tapio Rajala for discussions and for checking the manuscript.

\section{Notation and definitions}\label{sec:preliminaries}

In this section we introduce the basic notation, definitions,
and assumptions that are employed in the paper.

Throughout this paper, $(X,d,\frm)$ is a metric space that is equip\-ped
with a metric $d$ and a Borel regular outer measure $\frm$.
When a property holds outside a set of $\frm$-measure zero, we say that it holds
almost everywhere, abbreviated a.e.
We say that $\frm$ satisfies
a doubling property if
there exists a constant $C_d\ge 1$ such that
\begin{equation}\label{eq:doubling}
0<\frm(B(x,2r))\le C_d\frm(B(x,r))<\infty
\end{equation}
for every ball $B(x,r):=\{y\in X\colon\,d(y,x)<r\}$, $r>0$; we understand balls to be open.
We will not always assume the doubling property but we will always assume that
$0<\frm(B)<\infty$ for every ball $B$.

We assume $X$ to be proper, meaning that closed and bounded sets are compact.
All functions defined on $X$ or its subsets will take values in $[-\infty,\infty]$.
Given an open set $W\subset X$, we define $L^1_{\loc}(W)$
to be the class
of functions $u$ on $W$
such that $u\in L^1(W')$ for every open $W'\Subset W$, where the latter notation means that
$\overline{W'}$ is a compact subset of $W$.
Other local spaces of functions are defined analogously.

By a curve we mean a rectifiable continuous mapping from a compact interval of the real line into $X$.
The length of a curve $\gamma$
is denoted by $\ell_{\gamma}$. We assume every curve to be parametrized
by arc-length, which can always be done (see e.g. \cite[Theorem~3.2]{Hj}).
A nonnegative Borel function $g$ on $X$ is an upper gradient 
of a function $u$
on $X$ if for all nonconstant curves $\gamma$, we have
\begin{equation}\label{eq:definition of upper gradient}
|u(x)-u(y)|\le \int_{\gamma} g\,ds:=\int_0^{\ell_{\gamma}} g(\gamma(s))\,ds,
\end{equation}
where $x$ and $y$ are the end points of $\gamma$.
We interpret $|u(x)-u(y)|=\infty$ whenever  
at least one of $|u(x)|$, $|u(y)|$ is infinite.
Upper gradients were originally introduced in \cite{HK}.

The $1$-modulus of a family of curves $\Gamma$ is defined by
\[
\Mod_{1}(\Gamma):=\inf\int_{X}\rho\, \dfrm,
\]
where the infimum is taken over all nonnegative Borel functions $\rho$
such that $\int_{\gamma}\rho\,ds\ge 1$ for every curve $\gamma\in\Gamma$.
A property is said to hold for $1$-almost every curve
if it fails only for a curve family with zero $1$-modulus. 
If $g$ is a nonnegative $\frm$-measurable function on $X$
and (\ref{eq:definition of upper gradient}) holds for $1$-almost every curve,
we say that $g$ is a $1$-weak upper gradient of $u$. 
By only considering curves $\gamma$ in a set $A\subset X$,
we can talk about a function $g$ being a ($1$-weak) upper gradient of $u$ in $A$.

We say that $X$ supports a $(1,1)$-Poincar\'e inequality
if there exist constants $C_P>0$ and $\lambda \ge 1$ such that for every
ball $B(x,r)$, every $u\in L^1_{\loc}(X)$,
and every upper gradient $g$ of $u$,
we have
\[
\vint{B(x,r)}|u-u_{B(x,r)}|\, \dfrm 
\le C_P r\vint{B(x,\lambda r)}g\dfrm,
\]
where 
\[
u_{B(x,r)}:=\vint{B(x,r)}u\dfrm :=\frac 1{\frm(B(x,r))}\int_{B(x,r)}u\dfrm.
\]

If $X$ is complete, the measure $\frm$ satisfies the doubling condition \eqref{eq:doubling},
and the space supports a $(1,1)$-Poincar\'e inequality,
then we say that $X=(X,d,\frm)$ is a PI space.

Let $W\subset X$ be an open set. We let
\[
\Vert u\Vert_{N^{1,1}(W)}:=\Vert u\Vert_{L^1(W)}+\inf \Vert g\Vert_{L^1(W)},
\]
where the infimum is taken over all $1$-weak upper gradients $g$ of $u$ in $W$.
Then we define the Newton-Sobolev space
\[
N^{1,1}(W):=\{u\colon \|u\|_{N^{1,1}(W)}<\infty\},
\]
which was first introduced in \cite{S}.

We understand Newton-Sobolev functions to be defined at every $x\in W$
(even though $\Vert \cdot\Vert_{N^{1,1}(W)}$ is then only a seminorm).
It is known that for any $u\in N_{\loc}^{1,1}(W)$ there exists a minimal $1$-weak
upper gradient of $u$ in $W$, always denoted by $g_{u}$, satisfying $g_{u}\le g$ 
a.e. in $W$ for every $1$-weak upper gradient $g\in L_{\loc}^{1}(W)$
of $u$ in $W$, see e.g. the monograph Bj\"orn--Bj\"orn \cite[Theorem 2.25]{BB}.

Let $u,v \in N_{\loc}^{1,1}(W)$.
Then we have $u+v,\min\{u,v\},\max\{u,v\}\in N^{1,1}_{\loc}(W)$, see
\cite[Theorem 1.20]{BB}.
It is easy to see that the minimal $1$-weak upper gradients satisfy
\begin{equation}\label{eq:sum upper gradient}
g_{u+v}\le g_u+g_v\quad\textrm{a.e.}
\end{equation}
Also, 
\begin{equation}\label{eq:upper gradient locality}
g_u=g_v \textrm{ a.e. in }\{x \in W\colon  u(x)=v(x)\}
\end{equation}
by \cite[Corollary 2.21]{BB}.
For the above set, we use the short-hand notation $\{u=v\}$.
It follows that
\begin{equation}\label{eq:max upper gradient}
g_{\max\{u,v\}}=g_{u}\ch_{\{u>v\}}+g_{v}\ch_{\{u\le v\}}.
\end{equation}
It also follows that
\begin{equation}\label{eq:min max upper gradient}
\begin{split}
g_{\min\{u,v\}}+g_{\max\{u,v\}}
&=g_u\ch_{\{u\le v\}}+g_v\ch_{\{u> v\}}+g_u\ch_{\{u> v\}}+g_v\ch_{\{u\le v\}}\\
&=g_u+g_v\quad\textrm{a.e.}
\end{split}
\end{equation}

If $V\subset W$ are open subsets of $X$ and $u\in N_{\loc}^{1,1}(W)$, then also $u\in N_{\loc}^{1,1}(V)$,
and we denote the minimal $1$-weak upper gradient of $u$ in $V$ by $g_{u,V}$.
Note that usually we denote briefly $g_u=g_{u,W}$.
Now by \cite[Lemma 2.23]{BB} we have
\begin{equation}\label{eq:minimal ug wrt open set}
g_{u,V}=g_{u,W}\quad \textrm{a.e. in }V.
\end{equation}

Finally we note that $\liploc(W)\subset N_{\loc}^{1,1}(W)$,
see e.g. \cite[Proposition 1.14]{BB}.

Next we present the definition and basic properties of functions
of bounded variation on metric spaces, following \cite{M}. See also e.g. \cite{AFP, Fed} for the classical 
theory in the Euclidean setting.
Given an open set $W\subset X$ and a function $u\in L^1_{\loc}(W)$,
we define the total variation of $u$ in $W$ by
\[
\|Du\|(W):=\inf\left\{\liminf_{i\to\infty}\int_W g_{u_i}\dfrm:\, u_i\in \liploc(W),\, u_i\to u\textrm{ in } L^1_{\loc}(W)\right\},
\]
where each $g_{u_i}$ is again the minimal $1$-weak upper gradient of $u_i$
in $W$.
(In \cite{M}, pointwise Lipschitz constants were used in place of upper gradients, but the theory
can be developed similarly with either definition.)
We say that a function $u\in L^1(W)$ is of bounded variation, 
and denote $u\in\BV(W)$, if $\|Du\|(W)<\infty$.

From the definition it follows easily that if
$\{u_i\}_{i=1}^{\infty}$ is a sequence for which
$u_i\to u$ in $L^1_{\loc}(W)$ as $i\to\infty$, then
\begin{equation}\label{eq:lower semicontinuity}
\Vert Du\Vert(W)\le \liminf_{i\to\infty}\Vert Du_i\Vert(W).
\end{equation}

For an arbitrary set $A\subset X$, we define
\begin{equation}\label{eq:tot var arbitrary set}
\|Du\|(A):=\inf\{\|Du\|(W):\, A\subset W,\,W\subset X
\text{ is open}\}.
\end{equation}
In general, we understand the expression $\Vert Du\Vert(A)<\infty$ to mean that
there exists some open set $W\supset A$ such that $u$ is defined in $W$ with $u\in L^1_{\loc}(W)$ and $\Vert Du\Vert(W)<\infty$.

If $u\in L^1_{\loc}(W)$ and $\Vert Du\Vert(W)<\infty$,
then $\|Du\|(\cdot)$ is
a Borel regular outer measure on $W$ by \cite[Theorem 3.4]{M}.
A set $E\subset X$ is said to have finite perimeter
in $W$ if $\|D\ch_E\|(W)<\infty$, where $\ch_E$ is the characteristic function of $E$.
The perimeter of $E$ in $W$ is also denoted by
\[
P(E,W):=\|D\ch_E\|(W).
\]
If $P(E,X)<\infty$, we say briefly that $E$ is a set of finite perimeter.

For any set $A\subset X$ and $0<R<\infty$, the restricted Hausdorff content
of codimension one is defined by
\[
\mathcal{H}_{R}(A):=\inf\left\{ \sum_{j\in I}
\frac{\frm(B(x_{j},r_{j}))}{r_{j}}:\,A\subset\bigcup_{j\in I}B(x_{j},r_{j}),\,r_{j}\le R\right\},
\]
where the infimum is taken over finite and countable index sets $I$.
The codimension one Hausdorff measure of $A\subset X$ is then defined by
\[
\mathcal{H}(A):=\lim_{R\rightarrow 0}\mathcal{H}_{R}(A).
\]
For any $E\subset X$, the measure-theoretic boundary $\partial^{*}E$ is defined
as the set of points $x\in X$
at which both $E$ and its complement have strictly positive upper density, i.e.
\[
\limsup_{r\to 0}\frac{\frm(B(x,r)\cap E)}{\frm(B(x,r))}>0\quad
\textrm{and}\quad\limsup_{r\to 0}\frac{\frm(B(x,r)\setminus E)}{\frm(B(x,r))}>0.
\]
Suppose $(X,d,\frm)$ is a PI space.
For an open set $W\subset X$ and a $\frm$-measurable set $E\subset X$ with $P(E,W)<\infty$,
 for any Borel set $A\subset W$ we have
\begin{equation}\label{eq:def of theta}
P(E,A)=\int_{\partial^{*}E\cap A}\theta_E\,d\mathcal H,
\end{equation}
where
$\theta_E\colon \partial^*E\cap W\to [\alpha,C_d]$
with $\alpha=\alpha(C_d,C_P,\lambda)>0$, see \cite[Theorem 5.3]{A1} 
and \cite[Theorem 4.6]{AMP}.

\begin{definition}\label{def:isotropicity}
Let $(X,d,\frm)$ be a PI space. We say that $(X,d,\frm)$ is \emph{isotropic} if for
any pair of sets $E,F\subset X$ of finite perimeter with $F\subset E$
it holds that
\[
\theta_F(x) = \theta_E(x)\quad \textrm{for }\mathcal H\textrm{-a.e. }x\in \partial^*F\cap\partial^*E.
\]
\end{definition}

We will never assume isotropicity, but we give the definition for comparison with \cite{BPR}.

In some of our results, we will assume $(X,d,\frm)$ to be a PI space,
but our standing assumptions are merely the following.\\

\emph{Throughout this paper, $(X,d,\frm)$ is a proper metric space equipped with the
	Borel regular outer measure $\frm$, such that $0<\frm(B)<\infty$ for every ball $B$.}

\section{Main tools}\label{sec:main tools}

In this section we prove the following two propositions. These are the main tools that will be needed
in order to prove the decomposition theorem without the assumption of the space being isotropic.
In these propositions, we do not even need to assume $(X,d,\frm)$ to be a PI space, though in the decomposition
theorem this will be necessary. 

\begin{proposition}\label{prop:main decomposition result}
	Let $F,G\subset X$ with $\frm(F\cap G)=0$,
	and let $D\subset X$ be an arbitrary set such that
	\[
	P(F\cup G,D)=P(F,D)+P(G,D)<\infty.
	\]
	Let $F'\subset F$ and $G'\subset G$ be two other sets
	with $P(F',D)+P(G',D)<\infty$.
	Then
	\[
	P(F'\cup G',D)=P(F',D)+P(G',D).
	\]
\end{proposition}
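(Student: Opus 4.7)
The plan is to introduce the nonneg Borel measures
\[
\mu := \|D\ch_F\| + \|D\ch_G\| - \|D\ch_{F\cup G}\|
\quad\text{and}\quad
\mu' := \|D\ch_{F'}\| + \|D\ch_{G'}\| - \|D\ch_{F'\cup G'}\|,
\]
which quantify the ``defect'' in the subadditivity of perimeter, and to prove the domination $\mu'\le\mu$ as Borel measures on every open set on which they are defined. The hypothesis forces $\mu(D)=0$ in the inf-over-open-neighborhood sense, so this domination will transfer to $\mu'(D)=0$, and that together with the trivial $\le$ direction (which is immediate from $\ch_{F'\cup G'}=\ch_{F'}+\ch_{G'}$ a.e., as $\frm(F'\cap G')=0$) will give the proposition.

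The key tool is the submodular inequality
\[
P(A\cup B, U) + P(A\cap B, U) \le P(A, U) + P(B, U)
\]
on open $U$, which follows from \eqref{eq:min max upper gradient} applied to Lipschitz approximations of $\ch_A$ and $\ch_B$ together with the lower semicontinuity \eqref{eq:lower semicontinuity}. I apply it twice: with $(A,B)=(F, F'\cup G)$ and with $(A,B)=(G, F'\cup G')$. Since $F'\subset F$, $G'\subset G$, and $\frm(F\cap G)=0$, modulo $\frm$-null sets one has $F\cup(F'\cup G)=F\cup G$, $F\cap(F'\cup G)=F'$, $G\cup(F'\cup G')=F'\cup G$, and $G\cap(F'\cup G')=G'$. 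Adding the two resulting inequalities and cancelling the common term $P(F'\cup G, U)$ on both sides leaves exactly $\mu(U)\ge\mu'(U)$ for every open $U$ on which the relevant perimeters are finite.

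The main bookkeeping hurdle is to pass from this measure inequality on open neighborhoods to the numerical conclusion on the arbitrary set $D$. Given $\epsilon>0$, by intersecting open neighborhoods of $D$ that approximately realize the infima defining $P(F,D)$, $P(G,D)$, and $P(F'\cup G',D)$ I obtain a single open $U\supset D$ on which simultaneously $\mu(U)<\epsilon$ (using the hypothesis $P(F\cup G,D)=P(F,D)+P(G,D)$ together with $P(F\cup G, U)\ge P(F\cup G, D)$) and $P(F'\cup G', U)\le P(F'\cup G', D)+\epsilon$. Combining $P(F',U)\ge P(F',D)$, $P(G',U)\ge P(G',D)$, and $\mu'(U)\le\mu(U)<\epsilon$ then yields
\[
P(F',D)+P(G',D) \le P(F',U)+P(G',U) < P(F'\cup G', U)+\epsilon \le P(F'\cup G', D)+2\epsilon,
\]
and sending $\epsilon\to 0$ together with the opposite inequality finishes the proof.
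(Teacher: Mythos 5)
Your proof is correct, and it takes a genuinely different route from the paper. The paper's proof goes through Lemma~\ref{lem:almost optimal sequences} and Lemma~\ref{lem:open version}: one picks near-optimal Lipschitz approximants $u_i\to\ch_F$, $v_i\to\ch_G$, forms $w_i=\max\{u_i,v_i\}-\min\{u_i,v_i\}$ (which is near-optimal for $F\cup G$ since $\min\{u_i,v_i\}$ has asymptotically negligible upper gradient), truncates down to a near-optimal sequence $w_i'\le w_i$ for $F'\cup G'$, and then splits $w_i'$ along the disjoint open sets $\{u_i>v_i\}$ and $\{v_i>u_i\}$ to recover approximants for $F'$ and $G'$ separately. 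Your argument instead stays entirely at the level of perimeter measures: you apply the submodular inequality of Lemma~\ref{lem:BV functions form algebra} twice, to the pairs $(F,\,F'\cup G)$ and $(G,\,F'\cup G')$, add, cancel $P(F'\cup G,U)$, and land directly on the domination $\mu'(U)\le\mu(U)$ of the two subadditivity defects. This is shorter and more structural; it avoids any manipulation of approximating sequences (in particular the passage to a.e.-convergent subsequences and the $w_i'\le w_i$ construction). The one place where some care is needed, and which you flag only lightly, is that the cancellation of $P(F'\cup G,U)$ requires that quantity to be finite and that $\mu'(U)$ be well-defined; both follow by choosing $U$ via the intersection trick, since $P(F'\cup G,D)\le P(F',D)+P(G,D)<\infty$ by Lemma~\ref{lem:BV functions form algebra} and $P(F',D),P(G',D),P(F'\cup G',D)<\infty$ are already known. (Your final bookkeeping delivers $3\eps$ rather than $2\eps$ with the $\eps$-choices as written, but that is immaterial.) It is worth noting that your route makes Lemma~\ref{lem:almost optimal sequences} and Lemma~\ref{lem:open version} unnecessary for Proposition~\ref{prop:main decomposition result}, though the paper still needs Lemma~\ref{lem:open version}'s ingredients for Proposition~\ref{prop:perimeter in small large set}.
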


\begin{proposition}\label{prop:perimeter in small large set}
	Let $W'\subset W\subset X$ be two open sets.
	Suppose $F,G\subset X$ with $\frm(F\cap G)=0$ and
	\[
	P(F\cup G,W)=P(F,W)+P(G,W)<\infty.
	\]
	Then also
	\[
	P(F\cup G,W')=P(F,W')+P(G,W').
	\]
\end{proposition}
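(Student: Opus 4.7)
The plan is to recognize that the three perimeter functionals $P(F,\cdot)$, $P(G,\cdot)$, $P(F\cup G,\cdot)$ extend to finite Borel measures on $W$, and to exploit the fact that a subadditive inequality that is saturated on the whole space must be saturated on every Borel subset.

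First I would establish subadditivity of the perimeter for $F$ and $G$ on arbitrary open subsets of $X$: for any open $U\subset X$,
\[
P(F\cup G,U)\le P(F,U)+P(G,U).
\]
Since $\frm(F\cap G)=0$, we have $\ch_{F\cup G}=\ch_F+\ch_G$ a.e., so working with approximating sequences $u_i^F,u_i^G\in\liploc(U)$ of $\ch_F,\ch_G$ in $L^1_\loc(U)$ whose minimal upper gradients realize the total variations, the sums $u_i^F+u_i^G$ approximate $\ch_{F\cup G}$, and \eqref{eq:sum upper gradient} gives $g_{u_i^F+u_i^G}\le g_{u_i^F}+g_{u_i^G}$ a.e. Passing to the liminf yields the claim.

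Next, the hypothesis $P(F\cup G,W)<\infty$ together with the additivity assumption forces $P(F,W),P(G,W)<\infty$. By \cite[Theorem 3.4]{M}, each of $P(F,\cdot)$, $P(G,\cdot)$, $P(F\cup G,\cdot)$ is then a finite Borel regular outer measure on $W$, whose values on open subsets of $W$ coincide with the values given directly by the total variation definition. The subadditivity above, combined with outer regularity, extends to the inequality of measures
\[
P(F\cup G,A)\le P(F,A)+P(G,A)\quad\text{for every Borel }A\subset W.
\]

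The key step is then a standard saturation argument: set $\nu:=P(F,\cdot)+P(G,\cdot)-P(F\cup G,\cdot)$, which is a nonnegative finite Borel measure on $W$ by the previous display. The hypothesis says $\nu(W)=0$, hence $\nu(A)=0$ for every Borel set $A\subset W$. Applied to the open set $A=W'$, this gives exactly
\[
P(F\cup G,W')=P(F,W')+P(G,W'),
\]
which is the desired conclusion. The only mildly delicate point is ensuring the compatibility of the measure-theoretic restriction with the direct open-set definition of perimeter on $W'$, but this is precisely the content of the Borel measure extension cited above.
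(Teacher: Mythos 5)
Your proof is correct, and it takes a genuinely different route from the paper. You observe that under the hypotheses $P(F,\cdot)$, $P(G,\cdot)$, and $P(F\cup G,\cdot)$ are all finite Borel regular outer measures on $W$, that the subadditivity $P(F\cup G,A)\le P(F,A)+P(G,A)$ holds for every $A\subset W$ (for open sets it follows as you describe, or directly from Lemma~\ref{lem:BV functions form algebra} by dropping the nonnegative term $P(F\cap G,\cdot)$; for general $A$ it then follows from the infimum definition \eqref{eq:tot var arbitrary set}), and that therefore $\nu:=P(F,\cdot)+P(G,\cdot)-P(F\cup G,\cdot)$ is a nonnegative finite Borel measure on $W$. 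Since $\nu(W)=0$, also $\nu(W')=0$, which is the claim; indeed this gives the conclusion for every Borel $A\subset W$, not only open ones. The paper's proof instead avoids invoking the measure extension of the perimeter and works directly with the Lipschitz approximating sequences $u_i\to\ch_F$, $v_i\to\ch_G$: it shows $\lim_i\int_W g_{\max\{u_i,v_i\}}\,\dfrm=P(F\cup G,W)$ exactly, localizes to the inner sets $W'_t:=\{x\in W':d(x,X\setminus W')>t\}$ chosen so that $P(F\cup G,\partial W'_t)=0$ and $\frm(\partial W'_t)=0$, splits the total variation across $W'_t$ and $W\setminus\overline{W'_t}$ via a three-way lower-semicontinuity/limit argument, uses \eqref{eq:min g is small} to kill the $\min$ contribution, and finally lets $t\to 0$. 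That route stays within the elementary upper-gradient toolkit the author emphasizes in the remark closing Section~\ref{sec:main tools}; your route is shorter and more transparent, at the price of leaning on the Borel-measure structure of $P(\cdot,\cdot)$. Both are sound.
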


We start by reciting the following well known fact and its proof.

\begin{lemma}\label{lem:BV functions form algebra}
	Let $F,G\subset X$ and $D\subset X$ be arbitrary sets. Then
	\[
	P(F\cap G,D)+P(F\cup G,D)\le P(F,D)+P(G,D).
	\]
\end{lemma}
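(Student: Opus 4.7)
The plan is to first prove the inequality when $D$ is replaced by an open set $W$, using Lipschitz approximation together with the pointwise identity \eqref{eq:min max upper gradient} for minimal upper gradients, and then recover the statement for arbitrary $D$ by exploiting the definition \eqref{eq:tot var arbitrary set}. I may of course assume $P(F,D)+P(G,D)<\infty$, since otherwise the inequality is trivial.

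For the open-set version, fix an open $W\subset X$ with $\ch_F,\ch_G\in L^1_{\loc}(W)$ and $P(F,W),P(G,W)<\infty$. By definition of total variation I pick sequences $u_i,v_i\in\liploc(W)$ such that $u_i\to\ch_F$ and $v_i\to\ch_G$ in $L^1_{\loc}(W)$ with
\[
\int_W g_{u_i}\dfrm\to P(F,W),\qquad \int_W g_{v_i}\dfrm\to P(G,W).
\]
Then $\min\{u_i,v_i\},\max\{u_i,v_i\}\in\liploc(W)$, and they converge in $L^1_{\loc}(W)$ to $\min\{\ch_F,\ch_G\}=\ch_{F\cap G}$ and $\max\{\ch_F,\ch_G\}=\ch_{F\cup G}$ respectively. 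The lower semicontinuity \eqref{eq:lower semicontinuity} together with \eqref{eq:min max upper gradient} (which applies a.e.\ since $u_i,v_i\in N_{\loc}^{1,1}(W)$) then gives
\begin{align*}
P(F\cap G,W)+P(F\cup G,W)
&\le\liminf_{i\to\infty}\int_W\bigl(g_{\min\{u_i,v_i\}}+g_{\max\{u_i,v_i\}}\bigr)\dfrm\\
&=\liminf_{i\to\infty}\int_W(g_{u_i}+g_{v_i})\dfrm\\
&=P(F,W)+P(G,W).
\end{align*}

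To pass to an arbitrary $D$, fix $\eps>0$ and, using \eqref{eq:tot var arbitrary set}, choose open $W_F,W_G\supset D$ with $P(F,W_F)\le P(F,D)+\eps$ and $P(G,W_G)\le P(G,D)+\eps$. Set $W:=W_F\cap W_G$; this is open and contains $D$. Any Lipschitz approximation valid in $W_F$ restricts to one valid in $W$, and by \eqref{eq:minimal ug wrt open set} the minimal upper gradients agree a.e.\ on $W$, so $P(F,W)\le P(F,W_F)$ and similarly for $G$. Applying the open-set inequality in $W$, and using $P(F\cap G,D)\le P(F\cap G,W)$ and $P(F\cup G,D)\le P(F\cup G,W)$ from \eqref{eq:tot var arbitrary set}, I obtain the claim after sending $\eps\to 0$.

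There is no serious obstacle here; the argument is a standard submodularity proof transported to the metric setting. The only mildly delicate points are bookkeeping — making sure each minimal upper gradient is computed with respect to the correct open set (handled via \eqref{eq:minimal ug wrt open set}) — and verifying that the Lipschitz maxima and minima genuinely converge in $L^1_{\loc}(W)$ to the characteristic functions of $F\cup G$ and $F\cap G$, which follows from $|\max\{u_i,v_i\}-\max\{\ch_F,\ch_G\}|\le|u_i-\ch_F|+|v_i-\ch_G|$ and the analogous bound for $\min$.
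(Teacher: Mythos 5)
Your proposal is correct and follows essentially the same approach as the paper: assume the right-hand side is finite, pass to an open neighborhood $W\supset D$ on which $P(F,W)$ and $P(G,W)$ nearly attain $P(F,D)$ and $P(G,D)$, approximate $\ch_F$ and $\ch_G$ by locally Lipschitz sequences, and use \eqref{eq:min max upper gradient} together with lower semicontinuity. The only cosmetic difference is that you spell out the construction of $W$ as $W_F\cap W_G$ and state the open-set case as a separate step, whereas the paper does both inline.
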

\begin{proof}
	We can assume that the right-hand side is finite. Let $\eps>0$.
	By the definition of the total variation \eqref{eq:tot var arbitrary set},
	we find an open set $W\supset D$ such that
	\begin{equation}\label{eq:choice of W and D}
	P(F,W)<P(F,D)+\eps\quad\textrm{and}\quad P(G,W)<P(G,D)+\eps.
	\end{equation}
	Take sequences
	of functions $\{u_i\}_{i=1}^{\infty}$ and $\{v_i\}_{i=1}^{\infty}$ in $\liploc(W)$
	such that $u_i\to \ch_F$ and $v_i\to \ch_G$ in $L^1_{\loc}(W)$, and
	\[
	\lim_{i\to\infty}\int_{W}g_{u_i}\dfrm= P(F,W)\quad\textrm{and}\quad
	\lim_{i\to\infty}\int_{W}g_{v_i}\dfrm= P(G,W).
	\]
	We also have $\min\{u_i,v_i\}\to \ch_{F\cap G}$ and
	$\max\{u_i,v_i\}\to \ch_{F\cup G}$ in $L^1_{\loc}(W)$. Thus we get
	\begin{align*}
	&P(F\cap G,D)+P(F\cup G,D)\\
	&\qquad\qquad\le P(F\cap G,W)+P(F\cup G,W)\\
	&\qquad\qquad\le \liminf_{i\to\infty}\int_{W} g_{\min\{u_i,v_i\}}\dfrm+
	\liminf_{i\to\infty}\int_{W} g_{\max\{u_i,v_i\}}\dfrm\\
	&\qquad\qquad\le \liminf_{i\to\infty}\left(\int_{W} g_{\min\{u_i,v_i\}}\dfrm+
	\int_{W} g_{\max\{u_i,v_i\}}\dfrm\right)\\
	&\qquad\qquad = \liminf_{i\to\infty}\left(\int_{W} g_{u_i}\dfrm+
	\int_{W} g_{v_i}\dfrm\right)\quad\textrm{by }\eqref{eq:min max upper gradient}\\
	&\qquad\qquad= P(F,W)+P(G,W)\\
	&\qquad\qquad\le  P(F,D)+P(G,D)+2\eps.
	\end{align*}
	Letting $\eps\to 0$, we get the result.
\end{proof}

For any sequence of $\frm$-measurable sets $\{E_i\}_{i=1}^{\infty}$, we have
that $\ch_{\bigcup_{i=1}^j E_i} \to \ch_{\bigcup_{i=1}^{\infty} E_i}$ in $L^1_{\loc}(X)$ as $j\to\infty$, and so
by using \eqref{eq:lower semicontinuity} and then Lemma \ref{lem:BV functions form algebra}, we get
\begin{equation}\label{eq:countable subadditivity}
P\Big(\bigcup_{i=1}^{\infty} E_i,X\Big)\le \liminf_{j\to\infty} P\Big(\bigcup_{i=1}^{j} E_i,X\Big)
\le \sum_{i=1}^{\infty} P(E_i,X).
\end{equation}

Next we give the following result concerning approximating sequences that are
``almost optimal'' for a given set of finite perimeter.

\begin{lemma}\label{lem:almost optimal sequences}
Let $E'\subset E\subset X$ be two sets that have finite perimeter in an
open set $W\subset X$. Let $\eps\ge 0$ and
let $\{w_i\}_{i=1}^{\infty}$ be a sequence in $\liploc(W)$
such that $w_i\to \ch_E$ in $L^1_{\loc}(W)$ and
\[
\liminf_{i\to\infty}\int_{W}g_{w_i}\dfrm\le P(E,W)+\eps.
\] 
Then there exists a sequence $\{w_i'\}_{i=1}^{\infty}$ in $\liploc(W)$
such that $w_i'\le w_i$, $w_i'\to \ch_{E'}$ in $L^1_{\loc}(W)$, and
\[
\liminf_{i\to\infty}\int_{W}g_{w_i'}\dfrm \le P(E',W)+\eps.
\] 
\end{lemma}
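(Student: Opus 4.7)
The plan is to construct $w_i'$ as the pointwise minimum of $w_i$ with an independently chosen almost-optimal approximating sequence for $E'$. Precisely, let $\{v_i\}\subset \liploc(W)$ be a sequence with $v_i\to \ch_{E'}$ in $L^1_{\loc}(W)$ and $\lim_i \int_W g_{v_i}\dfrm = P(E',W)$, which exists by definition of the total variation, and set $w_i' := \min\{w_i,v_i\}$. Then $w_i' \in \liploc(W)$ because the minimum of locally Lipschitz functions is locally Lipschitz, and $w_i'\le w_i$ by construction. Moreover $w_i' \to \min\{\ch_E,\ch_{E'}\} = \ch_{E'}$ in $L^1_{\loc}(W)$, using that $E'\subset E$. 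So the only substantive task is to verify the integral bound.

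For this, I would use \eqref{eq:min max upper gradient} to rewrite, a.e.\ on $W$,
\[
g_{w_i'} = g_{w_i} + g_{v_i} - g_{\max\{w_i,v_i\}},
\]
and then integrate over $W$. The key observation is that $\max\{w_i,v_i\} \to \max\{\ch_E,\ch_{E'}\} = \ch_E$ in $L^1_{\loc}(W)$, so lower semicontinuity \eqref{eq:lower semicontinuity} yields
\[
\liminf_i \int_W g_{\max\{w_i,v_i\}}\dfrm \ge P(E,W),
\]
which, because of the minus sign in the identity above, contributes in the right direction.

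To turn this into a bound on $\liminf_i \int_W g_{w_i'}\dfrm$, I would extract subsequences so that all three integrals converge simultaneously: first a subsequence along which $\int_W g_{w_i}\dfrm$ converges to its liminf (hence to a value $\le P(E,W)+\eps$), and then a further subsequence (possible because the relevant sequences of integrals are bounded) along which $\int_W g_{\max\{w_i,v_i\}}\dfrm$ also converges; the integrals $\int_W g_{v_i}\dfrm$ already converge to $P(E',W)$. Passing to the limit in the displayed identity along this final subsequence gives
\[
\lim_i \int_W g_{w_i'}\dfrm \le (P(E,W)+\eps) + P(E',W) - P(E,W) = P(E',W)+\eps,
\]
and since a liminf is bounded above by any subsequential limit, $\liminf_i \int_W g_{w_i'}\dfrm \le P(E',W)+\eps$ as required.

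The main obstacle is the careful liminf bookkeeping, but it is resolved precisely because lower semicontinuity supplies a \emph{lower} bound on $\int_W g_{\max\{w_i,v_i\}}\dfrm$, which enters with a minus sign in the identity derived from \eqref{eq:min max upper gradient}.
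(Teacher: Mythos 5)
Your proof is correct and takes essentially the same route as the paper: same construction $w_i' := \min\{w_i, v_i\}$ with $v_i$ an almost-optimal approximating sequence for $E'$, and the same key mechanism, namely the identity \eqref{eq:min max upper gradient} combined with lower semicontinuity applied to $\max\{w_i,v_i\}\to\ch_E$. The only difference is in the bookkeeping at the end: you pass to a subsequence along which all three integrals converge, whereas the paper chains $\liminf$ inequalities directly using superadditivity of $\liminf$ — both are valid ways to close the argument.
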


\begin{proof}
	We choose a sequence $\{v_i\}_{i=1}^{\infty}$ in $\liploc(W)$
	such that $v_i\to \ch_{E'}$ in $L^1_{\loc}(W)$ and
	\begin{equation}\label{eq:choice of vi for W}
	\lim_{i\to\infty}\int_{W}g_{v_i}\dfrm= P(E',W).
	\end{equation}
Note that
\[
\max\{w_i,v_i\}\to \ch_E\ \textrm{ in }L^1_{\loc}(W),
\]
and so
\begin{equation}\label{eq:E' and E}
P(E,W)\le \liminf_{i\to\infty}\int_W g_{\max\{w_i,v_i\}}\dfrm.
\end{equation}
By assumption and by \eqref{eq:choice of vi for W}, we have
\begin{align*}
&P(E,W)+P(E',W)\\
&\qquad \ge \liminf_{i\to\infty}\left[\int_W g_{w_i}\dfrm+\int_W g_{v_i}\dfrm\right]-\eps\\
&\qquad =\liminf_{i\to\infty}\left[\int_W g_{\min\{w_i,v_i\}}\dfrm+
\int_W g_{\max\{w_i,v_i\}}\dfrm\right]-\eps\quad\textrm{by }
\eqref{eq:min max upper gradient}\\
&\qquad \ge \liminf_{i\to\infty}\int_W g_{\min\{w_i,v_i\}}\dfrm+P(E,W)-\eps\quad\textrm{by }
\eqref{eq:E' and E}.
\end{align*}
It follows that
\[
P(E',W)\ge \lim_{i\to\infty}\int_W g_{\min\{w_i,v_i\}}\dfrm-\eps.
\]
Now $w_i':=\min\{w_i,v_i\}\to \ch_{E'}$ in $L^1_{\loc}(W)$, giving the result.
\end{proof}

Proposition \ref{prop:main decomposition result} will follow almost directly from the following version
that considers open sets $W\subset X$.

\begin{lemma}\label{lem:open version}
	Let $W\subset X$ be an open set,
	let $F,G\subset X$ have finite perimeter in $W$ with $\frm(F\cap G)=0$,
	let $\eps\ge 0$, and
	suppose that
	\[
	P(F\cup G,W)+\eps\ge P(F,W)+P(G,W).
	\]
	Let $F'\subset F$ and $G'\subset G$ be two other sets with finite perimeter in $W$.
	Then also
	\[
	P(F'\cup G',W)+2\eps \ge P(F',W)+P(G',W).
	\]
\end{lemma}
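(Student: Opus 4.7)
The plan is to build Lipschitz approximating sequences for $F'$, $G'$, and $F'\cup G'$ out of chosen near-optimal sequences for $F$ and $G$, arranging matters so that the sum identity for minimal upper gradients reduces the desired inequality to a single application of Lemma~\ref{lem:almost optimal sequences}.

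First I would pick $u_i\to\ch_F$ and $v_i\to\ch_G$ in $L^1_{\loc}(W)$ with $u_i,v_i\in\liploc(W)$, $\int_W g_{u_i}\to P(F,W)$ and $\int_W g_{v_i}\to P(G,W)$, and truncate to assume $u_i,v_i\in[0,1]$ (which preserves all three properties via \eqref{eq:max upper gradient} and \eqref{eq:upper gradient locality}). Then I symmetrize by setting $\tilde u_i:=(u_i-v_i)_+$ and $\tilde v_i:=(v_i-u_i)_+$. These still lie in $\liploc(W)\cap[0,1]$; they converge to $\ch_F$ and $\ch_G$ in $L^1_{\loc}(W)$ by dominated convergence; and crucially they have \emph{pointwise} disjoint supports, $\tilde u_i\cdot\tilde v_i\equiv 0$ on $W$. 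Their sum equals $|u_i-v_i|$, so by \eqref{eq:upper gradient locality} (together with $g_{-f}=g_f$) and \eqref{eq:sum upper gradient} one gets $g_{\tilde u_i+\tilde v_i}=g_{u_i-v_i}\le g_{u_i}+g_{v_i}$ a.e., and therefore
\[
\limsup_i\int_W g_{\tilde u_i+\tilde v_i}\le P(F,W)+P(G,W)\le P(F\cup G,W)+\eps.
\]
The hypothesis of Lemma~\ref{lem:almost optimal sequences} is thus met with $E=F\cup G$, $E'=F'\cup G'$, and the parameter~$\eps$. It yields $w_i'\le\tilde u_i+\tilde v_i$ in $\liploc(W)$ with $w_i'\to\ch_{F'\cup G'}$ in $L^1_{\loc}(W)$ and $\liminf_i\int_W g_{w_i'}\le P(F'\cup G',W)+\eps$, and we may arrange $w_i'\ge 0$.

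The key step is to split $w_i'$ compatibly with $F'$ and $G'$. I define $\hat u_i:=\min\{w_i',\tilde u_i\}$ and $\hat v_i:=\min\{w_i',\tilde v_i\}$. Since $\tilde u_i,\tilde v_i$ have pointwise disjoint supports, at every $x\in W$ at least one of them vanishes: if $\tilde u_i(x)=0$, then $\hat u_i(x)=0$ and $\hat v_i(x)=\min\{w_i'(x),\tilde v_i(x)\}=\min\{w_i'(x),\tilde u_i(x)+\tilde v_i(x)\}=w_i'(x)$, using $w_i'\le\tilde u_i+\tilde v_i$ at the last step; the symmetric case is analogous. Hence $\hat u_i+\hat v_i=w_i'$ and $\hat u_i\cdot\hat v_i=0$ pointwise on $W$. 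From the latter and two applications of \eqref{eq:upper gradient locality} it follows that $g_{\hat u_i+\hat v_i}=g_{\hat u_i}+g_{\hat v_i}$ a.e., so $\int_W g_{w_i'}=\int_W g_{\hat u_i}+\int_W g_{\hat v_i}$. Moreover, $\hat u_i\to\min\{\ch_{F'\cup G'},\ch_F\}=\ch_{F'}$ and symmetrically $\hat v_i\to\ch_{G'}$ in $L^1_{\loc}(W)$, where $\frm(F\cap G)=0$ (so $\frm(F\cap G')=\frm(F'\cap G)=0$) is what absorbs the stray terms.

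Combining with lower semicontinuity \eqref{eq:lower semicontinuity}, $\liminf_i\int_W g_{\hat u_i}\ge P(F',W)$ and $\liminf_i\int_W g_{\hat v_i}\ge P(G',W)$, so
\[
P(F',W)+P(G',W)\le\liminf_i\Bigl(\int_W g_{\hat u_i}+\int_W g_{\hat v_i}\Bigr)=\liminf_i\int_W g_{w_i'}\le P(F'\cup G',W)+\eps,
\]
which is in fact stronger than the claimed bound with $2\eps$. The main obstacle, and the reason for the initial symmetrization to $\tilde u_i,\tilde v_i$, is ensuring that the identities $\hat u_i+\hat v_i=w_i'$ and $\hat u_i\cdot\hat v_i=0$ hold \emph{pointwise} rather than merely almost everywhere, so that the upper-gradient sum identity carries no error term.
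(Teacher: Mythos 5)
Your proof is correct, and while it follows the same overall blueprint as the paper's argument --- take near-optimal sequences $u_i\to\ch_F$, $v_i\to\ch_G$, form the difference $|u_i-v_i|$ as an approximating sequence for $\ch_{F\cup G}$, descend to $F'\cup G'$ via Lemma~\ref{lem:almost optimal sequences}, split the result, and use upper-gradient locality --- your implementation differs in three ways that each streamline the argument. First, you bound $\limsup_i\int_W g_{\tilde u_i+\tilde v_i}$ directly via $g_{|u_i-v_i|}=g_{u_i-v_i}\le g_{u_i}+g_{v_i}$ a.e., bypassing the paper's two-step estimate (its inequalities \eqref{eq:max is almost optimal} and \eqref{eq:min g is small}, the latter being a somewhat delicate argument that $\limsup_i\int_W g_{\min\{u_i,v_i\}}\le\eps$); this is what produces the slack $\eps$ rather than $2\eps$. (The paper could also have obtained $\eps$ by summing the $\max$ and $\min$ upper gradients before taking $\limsup$; since the end game is $\eps\to 0$, the constant is immaterial.) Second, your splitting $w_i'=\hat u_i+\hat v_i$ with $\hat u_i:=\min\{w_i',\tilde u_i\}$ and $\hat v_i:=\min\{w_i',\tilde v_i\}$ produces functions that are manifestly in $\liploc(W)$, whereas the paper multiplies $w_i'$ by characteristic functions of the open sets $F_i=\{u_i>v_i\}$, $G_i=\{v_i>u_i\}$ and must argue separately that $w_i'\ch_{F_i}\in\liploc(W)$ because $w_i'$ vanishes on $\partial F_i\cap W$. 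Third, because $\min$ is $L^1$-continuous, you never need to pass to an a.e.\ convergent subsequence, which the paper does in order to show $\ch_{F_i}\to 0$ a.e.\ in $G$. The one small point worth stating explicitly, which you only implicitly used, is that after the initial truncation to $[0,1]$ one still has $\limsup_i\int_W g_{u_i}\le P(F,W)$ (truncation only decreases the minimal upper gradient), which is all that the $\limsup$ computation needs.
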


\begin{proof}
	Choose sequences $\{u_i\}_{i=1}^{\infty}$ and $\{v_i\}_{i=1}^{\infty}$ in $\liploc(W)$
	such that $u_i\to \ch_F$ and $v_i\to \ch_{G}$ in $L^1_{\loc}(W)$, and
	\[
	\lim_{i\to\infty}\int_{W}g_{u_i}\dfrm = P(F,W)
	\quad\textrm{and}\quad 
	\lim_{i\to\infty}\int_{W}g_{v_i}\dfrm = P(G,W).
	\]
	Passing to subsequences (not relabeled), we can also get 
	$u_i\to \ch_F$ and $v_i\to \ch_{G}$ a.e. in $W$.
	We have
	\begin{equation}\label{eq:max is almost optimal}
	\begin{split}
	\limsup_{i\to\infty}\int_{W} g_{\max\{u_i,v_i\}}\dfrm
	&\le \limsup_{i\to\infty}\left[\int_{W} g_{u_i}\dfrm+\int_{W} g_{v_i}\dfrm\right]
	\quad\textrm{by }\eqref{eq:max upper gradient}\\
	&= P(F,W)+P(G,W)\\
	&\le P(F\cup G,W)+\eps\quad\textrm{by assumption}.
	\end{split}
\end{equation}
	We also have
\begin{align*}
&\lim_{i\to\infty}\left[\int_{W} g_{u_i}\dfrm+\int_{W} g_{v_i}\dfrm\right]\\
&\qquad = P(F,W)+P(G,W)\\
&\qquad \le P(F\cup G,W)+\eps\\
&\qquad \le \liminf_{i\to\infty}\int_{W} g_{\max\{u_i,v_i\}}\dfrm+\eps
\quad\textrm{since }\max\{u_i,v_i\}\to \ch_{F\cup G}\textrm{ in }L^1_{\loc}(W)\\
&\qquad = \liminf_{i\to\infty}\left[\int_{\{u_i>v_i\}} g_{u_i}\dfrm+\int_{\{u_i\le v_i\}} g_{v_i}\dfrm\right]+\eps
\quad \textrm{by }\eqref{eq:max upper gradient}.
\end{align*}
Thus necessarily (note that $g_{u_i}$ are $g_{v_i}$ are still the minimal $1$-weak upper gradients
in $W$, even though we integrate over a smaller set)
\[
\limsup_{i\to\infty}\left[\int_{\{u_i\le v_i\}} g_{u_i}\dfrm+
\int_{\{u_i> v_i\}} g_{v_i}\dfrm\right]\le \eps.
\]
Thus by the analog of \eqref{eq:max upper gradient} for $\min$, we get
\begin{equation}\label{eq:min g is small}
\begin{split}
\limsup_{i\to\infty}\int_W g_{\min\{u_i,v_i\}}\dfrm
= \limsup_{i\to\infty}\left[\int_{\{u_i\le v_i\}} g_{u_i}\dfrm+\int_{\{u_i> v_i\}} g_{v_i}\dfrm\right]
\le \eps.
\end{split}
\end{equation}
Since  $\frm(F\cap G)=0$, the functions
\[
w_i:=\max\{u_i,v_i\}-\min\{u_i,v_i\}
\]
still converge to $\ch_{F\cup G}$ in $L_{\loc}^1(W)$, with the following ``almost optimality'':
\begin{equation}\label{eq:wi almost optimality}
\begin{split}
\limsup_{i\to\infty}\int_{W}g_{w_i}\dfrm
&\le \limsup_{i\to\infty}\int_{W}g_{\max\{u_i,v_i\}}\dfrm+\limsup_{i\to\infty}\int_{W}g_{\min\{u_i,v_i\}}\dfrm
\quad\textrm{by }\eqref{eq:sum upper gradient}\\
&\le P(F\cup G,W)+2\eps\quad\textrm{by }\eqref{eq:max is almost optimal}\textrm{ and }\eqref{eq:min g is small}.
\end{split}
\end{equation}
Recall that $u_i,v_i\in\liploc(W)$ and so also $w_i\in\liploc(W)$.
Each $\{w_i>0\}$ consists of two disjoint open sets $F_i:=\{u_i>v_i\}$ and $G_i:=\{v_i>u_i\}$.
Since we had $u_i\to \ch_F$ and $v_i\to \ch_{G}$ a.e. in $W$, it follows that
\begin{equation}\label{eq:Fi WG}
\ch_{F_i}\to 0 \textrm{ a.e. in }G\textrm{ and }\ch_{G_i}\to 0 \textrm{ a.e. in }F.
\end{equation}
By Lemma \ref{lem:BV functions form algebra}, we have $P(F'\cup G',W)<\infty$.
Now by Lemma \ref{lem:almost optimal sequences} we find a sequence $\{w_i'\}_{i=1}^{\infty}$
in $\liploc(W)$ with $w_i'\to \ch_{F'\cup G'}$ in $L^1_{\loc}(W)$, $w_i'\le w_i$, and
\begin{equation}\label{eq:wi prime optimality}
\liminf_{i\to\infty}\int_{W}g_{w_i'}\dfrm\le P(F'\cup G',W)+2\eps.
\end{equation}
Passing to a subsequence (not relabeled), we can also get $w_i'\to \ch_{F'\cup G'}$ a.e. in $W$.
By truncating, we can assume that $w_i'\ge 0$
(then we still have $w_i'\le \max\{0,w_i\}$).
Now each set $\{w_i'>0\}\subset \{w_i>0\}$ is also contained in the union of the disjoint open sets $F_i$ and $G_i$.
It follows that $w'_i=w'_i \ch_{F_i} + w'_i \ch_{G_i}$.
We have $w'_i=0$ on $\partial F_i\cap W$, and so $w'_i \ch_{F_i}$ is in $\liploc(W)\subset N^{1,1}_{\loc}(W)$.
Similarly, $w'_i \ch_{G_i}$ is in $\liploc(W)$.
By \eqref{eq:upper gradient locality}, now
\begin{equation}\label{eq:gwi formula}
g_{w_i'}=g_{w_i'}\ch_{F_i}+g_{w_i'}\ch_{G_i}=g_{ w_i'\ch_{F_i}}+g_{ w_i'\ch_{G_i}}\quad\textrm{ a.e. in }W.
\end{equation}
Moreover, since $w_i'\to \ch_{F'\cup G'}$ a.e. in $W$ and using \eqref{eq:Fi WG}, we get
\begin{equation}\label{eq:wi char function convergence}
w'_i \ch_{F_i}\to \ch_{F'}\textrm{ a.e. in }W\quad 
\textrm{and similarly}\quad w'_i \ch_{G_i}\to\ch_{G'}\textrm{ a.e. in }W.
\end{equation}
By Lebesgue's dominated convergence theorem, we have these convergences also in $L_{\loc}^1(W)$.
It now follows that
\begin{align*}
P(F',W)+P(G',W)
&\le \liminf_{i\to\infty}\left[\int_{W} g_{w_i' \ch_{F_i} }\dfrm +\int_{W} g_{ w_i'\ch_{G_i}}\dfrm \right]\\
&=\liminf_{i\to\infty}\int_W g_{w_i'}\dfrm\quad
\textrm{by }\eqref{eq:gwi formula}\\
&\le P(F'\cup G',W)+2\eps\quad\textrm{by }\eqref{eq:wi prime optimality}.
\end{align*}
\end{proof}

\begin{proof}[Proof of Proposition \ref{prop:main decomposition result}.]
%By the definition of the total variation \eqref{eq:tot var arbitrary set},
%there exists an open set $W_0\supset D$ such that
%\[
%P(F',W_0)< P(F',D)+1\quad\textrm{and}\quad P(G',W_0)< P(G',D)+1.
%\]
By Lemma \ref{lem:BV functions form algebra}, we know that $P(F'\cup G',D)<\infty$.
Let $\eps>0$.
By the definition of the total variation \eqref{eq:tot var arbitrary set}, there exists an open set
$W\supset D$ with
%$W\subset W_0$ and
\begin{equation}\label{eq:choice of W 2}
	P(F'\cup G',W)\le P(F'\cup G',D)+\eps
\end{equation}
as well as
$P(F',W)<\infty$, $P(G',W)<\infty$, and
\begin{equation}\label{eq:choice of W 1}
	P(F,W) < P(F,D)+\eps/2\quad\textrm{and}\quad P(G,W) < P(G,D)+\eps/2.
\end{equation}
Then
\begin{align*}
P(F\cup G,W)
&\ge P(F\cup G,D)\\
&= P(F,D)+P(G,D)\quad\textrm{by assumption}\\
&\ge P(F,W)+P(G,W)-\eps\quad\textrm{by }\eqref{eq:choice of W 1}
\end{align*}	
and so
\begin{align*}
P(F'\cup G',D)
&\ge P(F'\cup G',W)-\eps\quad\textrm{by }\eqref{eq:choice of W 2}\\
&\ge P(F',W)+P(G',W)-2\eps\quad\textrm{by Lemma }\ref{lem:open version}\\
&\ge P(F',D)+P(G',D)-2\eps.
\end{align*}
Letting $\eps\to 0$, we get $P(F'\cup G',D)\ge P(F',D)+P(G',D)$. Since the opposite inequality always holds
by Lemma \ref{lem:BV functions form algebra}, we get the result.
\end{proof}

\begin{proof}[Proof of Proposition \ref{prop:perimeter in small large set}]
Take sequences $\{u_i\}_{i=1}^{\infty}$ and $\{v_i\}_{i=1}^{\infty}$
in $\liploc(W)$ such that
$u_i\to \ch_F$ in $L^1_{\loc}(W)$ and $v_i\to \ch_G$ in $L^1_{\loc}(W)$, as well as
\[
\lim_{i\to\infty}\int_{W} g_{u_i}\dfrm=P(F,W)\quad\textrm{and}\quad 
\lim_{i\to\infty}\int_{W} g_{v_i}\dfrm=P(G,W).
\]
Using \eqref{eq:max is almost optimal} with $\eps=0$, we get
\[
	P(F\cup G,W) \ge \limsup_{i\to\infty}\int_{W}g_{\max \{u_i,v_i\}}\dfrm,
\]
and then necessarily
\begin{equation}\label{eq:F cup G perimeter}
P(F\cup G,W) = \lim_{i\to\infty}\int_{W}g_{\max \{u_i,v_i\}}\dfrm.
\end{equation}
Define
\[
W_t':=\{x\in W'\colon d(x,X\setminus W')>t\},\quad t>0.
\]
Since $P(F\cup G,\cdot)$ is a Borel outer measure on $W$,
for all but at most countably many $t>0$ we have
\[
P(F\cup G,\partial W_t')=0\quad\textrm{and also}\quad \frm(\partial W_t')=0.
\]
For all such $t>0$, the fact that $\max \{u_i,v_i\}\to \ch_{F\cup G}$ in $L^1(W_t')$ implies
\[
P(F\cup G,W_t')\le \liminf_{i\to\infty}\int_{W_t'}g_{\max \{u_i,v_i\}}\dfrm
\]
(note that $g_{\max \{u_i,v_i\}}$ still denotes the minimal $1$-weak upper gradient in $W$;
recall \eqref{eq:minimal ug wrt open set})
and
\[
P(F\cup G,W\setminus \overline{W_t'})\le \liminf_{i\to\infty}\int_{W\setminus \overline{W_t'}}g_{\max \{u_i,v_i\}}\dfrm,
\]
and thirdly
\begin{align*}
P(F\cup G,W_t')+P(F\cup G,W\setminus \overline{W_t'})
&= P(F\cup G,W) \\
&=\lim_{i\to\infty}\int_{W} g_{\max \{u_i,v_i\}}\dfrm\quad
\textrm{by }\eqref{eq:F cup G perimeter}.
\end{align*}
Note that generally, if for nonnegative numbers $a,b,\{a_i\}_{i=1}^{\infty},\{b_i\}_{i=1}^{\infty}$ we have
\[
a\le \liminf_{i\to \infty}a_i\quad\textrm{and}\quad b\le \liminf_{i\to \infty}b_i
\quad\textrm{and}\quad a+b=\lim_{i\to\infty}(a_i+b_i),
\]
then necessarily $\lim_{i\to\infty}a_i=a$ and $\lim_{i\to\infty}b_i=b$.
Hence we get
\[
P(F\cup G,W_t')= \lim_{i\to\infty}\int_{W_t'}g_{\max \{u_i,v_i\}}\dfrm.
\]
By \eqref{eq:min g is small} with $\eps=0$, we have
\begin{equation}\label{eq:min g is small 2}
\limsup_{i\to\infty}\int_{W_t'}g_{\min\{u_i,v_i\}}\dfrm
\le \limsup_{i\to\infty}\int_{W}g_{\min\{u_i,v_i\}}\dfrm=0.
\end{equation}
Thus
\begin{align*}
P(F\cup G,W_t')
&=\lim_{i\to\infty}\int_{W_t'}g_{\max \{u_i,v_i\}}\dfrm\\
&=\lim_{i\to\infty}\left[\int_{W_t'}g_{u_i}\dfrm+\int_{W_t'}g_{v_i}\dfrm-
\int_{W_t'}g_{\min\{u_i,v_i\}}\dfrm \right]\quad\textrm{by }\eqref{eq:min max upper gradient}\\
&=\lim_{i\to\infty}\left[\int_{W_t'}g_{u_i}\dfrm+\int_{W_t'}g_{v_i}\dfrm\right]\quad
\textrm{by }\eqref{eq:min g is small 2}\\
&\ge P(F,W_t')+P(G,W_t')
\end{align*}
since $u_i\to \ch_F$ and $v_i\to \ch_G$ in $L^1_{\loc}(W_t')$.
Letting $t\to 0$, we get
\[
P(F\cup G,W')\ge P(F,W')+P(G,W').
\]
Since we also have the opposite inequality by Lemma \ref{lem:BV functions form algebra},
the result follows.
\end{proof}

\begin{remark}
In \cite[Section 2]{BPR}, results similar to this section were proved by relying on
the representation \eqref{eq:def of theta} of perimeter with respect to the Hausdorff measure,
as well as isotropicity (Definition \ref{def:isotropicity}).
Especially the first fact is very commonly used when studying BV functions,
see e.g. \cite{ACMM,AMP,KKST,L}, to mention just a few examples.
In this section, we have instead used only quite elementary methods, mostly
relying on the definition of perimeter and the basic properties of weak upper gradients,
which hold in much more general metric measure spaces than just PI spaces.
Thus our results and methods may be also of some independent interest. 
\end{remark}

\section{Decomposition theorem}\label{sec:decomposition}

In this section we show how to prove the decomposition theorem
as well as some other results of \cite{BPR} without assuming the space to be isotropic.
Many of the definitions and results below are directly from \cite{BPR}, apart from the fact that
$(X,d,\frm)$ is not assumed to be an isotropic PI space.

First we note a small technical point: until now we have understood
sets of finite perimeter to be $\frm$-measurable, but in \cite{BPR} a set of finite perimeter
is always understood to be Borel. For consistency, in this section we also adopt the latter as part of the
definition of sets of finite perimeter.

\begin{definition}
Let $E\subset X$ be a set of finite perimeter. Given any Borel set $D\subset X$, we say that
$E$ is \emph{decomposable} in $D$ provided there exists a partition $\{F,G\}$ of $E\cap D$
into sets of finite perimeter such that $\frm(F),\frm(G)>0$ and $P(E,D)=P(F,D)+P(G,D)$.
On the other hand, we say that $E$ is \emph{indecomposable in} $D$ if it is not
decomposable in $D$. For brevity, we say that $E$ is \emph{decomposable}
(resp. \emph{indecomposable}) provided it is decomposable in $X$ (resp. indecomposable in $X$).
\end{definition}

The following lemma is \cite[Lemma 2.8]{BPR},
except that there $(X,d,\frm)$ was assumed to be an isotropic PI space.

\begin{lemma}\label{lem:Lemma 2.8}
Let $E\subset X$ be a set
of finite perimeter and let $D\subset X$ be a Borel set. Suppose that $\{F,G\}$ is a Borel partition
of $E$ such that $P(E,D)=P(F,D)+P(G,D)$. Then $P(A,D)=P(A\cap F,D)+P(A\cap G,D)$
for every set $A\subset E$ of finite perimeter.
\end{lemma}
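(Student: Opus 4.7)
The plan is to obtain this lemma as an essentially immediate corollary of Proposition \ref{prop:main decomposition result}, by choosing $F' := A\cap F$ and $G' := A\cap G$. Since $\{F,G\}$ partitions $E$ and $A\subset E$, these satisfy $F'\subset F$, $G'\subset G$, $\frm(F'\cap G')=0$, and $F'\cup G' = A\cap(F\cup G) = A$. The outer ``decomposition hypothesis'' of Proposition \ref{prop:main decomposition result} is exactly the assumption of the lemma, namely $P(F\cup G,D) = P(E,D) = P(F,D)+P(G,D)$, which is finite since $P(E,D)\le P(E,X)<\infty$; in particular each of $P(F,D)$ and $P(G,D)$ is finite.

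The one hypothesis of Proposition \ref{prop:main decomposition result} that is not instantly visible is the finiteness $P(F',D)+P(G',D)<\infty$. I would derive this from Lemma \ref{lem:BV functions form algebra}: applying it to the pair $A,F$ yields
\[
P(A\cap F,D) + P(A\cup F,D) \le P(A,D)+P(F,D) < \infty,
\]
so $P(A\cap F,D)<\infty$, and the same argument with $G$ in place of $F$ gives $P(A\cap G,D)<\infty$. Here I use that $A$ has finite perimeter (and hence $P(A,D)\le P(A,X)<\infty$) and that $P(F,D),P(G,D)<\infty$ from the previous paragraph.

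With all hypotheses verified, Proposition \ref{prop:main decomposition result} applied to $F'=A\cap F$ and $G'=A\cap G$ directly yields
\[
P(A,D) \;=\; P(F'\cup G',D) \;=\; P(F',D)+P(G',D) \;=\; P(A\cap F,D)+P(A\cap G,D),
\]
which is exactly the claim. No real obstacle is expected: all the substantive work has been carried out in the proof of Proposition \ref{prop:main decomposition result} (and, behind it, Lemma \ref{lem:open version}), and this lemma amounts to checking that the setup fits that proposition's hypotheses for the specific sub-partition $\{A\cap F, A\cap G\}$ of $A$.
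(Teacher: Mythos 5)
Your proposal is correct and follows essentially the same route as the paper: invoke Lemma \ref{lem:BV functions form algebra} to get $P(A\cap F,D),P(A\cap G,D)<\infty$, then apply Proposition \ref{prop:main decomposition result} with $F'=A\cap F$, $G'=A\cap G$. The paper's proof is just a more terse version of exactly this argument.
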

\begin{proof}
By Lemma \ref{lem:BV functions form algebra}, we have
$P(A\cap F,D)<\infty$ and $P(A\cap G,D)<\infty$.
Let $F':=A\cap F$ and $G':=A\cap G$.
Now the claim follows from Proposition \ref{prop:main decomposition result}.
\end{proof}

Next, we note that Corollary 2.9 of \cite{BPR} holds without the isotropicity
condition, since this condition is used in the proof only via the application of \cite[Lemma 2.8]{BPR}.

We also get the following

\begin{proposition}\label{prop:stability}
Let $E\subset X$ be a set of finite perimeter.
Let $\{E_n\}_{n=1}^{\infty}$ be an increasing sequence of indecomposable sets
such that $E=\bigcup_{n=1}^{\infty}E_n$. Then $E$ is an indecomposable set.
\end{proposition}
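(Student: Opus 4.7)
The plan is to argue by contradiction: assume $E$ is decomposable, and transfer the decomposition to some $E_n$ via Lemma~\ref{lem:Lemma 2.8}, contradicting the indecomposability of $E_n$.

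Suppose then that $\{F,G\}$ is a Borel partition of $E$ with $\frm(F),\frm(G)>0$ and $P(E,X)=P(F,X)+P(G,X)$. For each $n$, set $F_n:=F\cap E_n$ and $G_n:=G\cap E_n$. Since $E_n\subset E$, the pair $\{F_n,G_n\}$ is a Borel partition of $E_n$. By the definition of indecomposability, each $E_n$ has finite perimeter; combined with Lemma~\ref{lem:BV functions form algebra} this gives $P(F_n,X),P(G_n,X)<\infty$. Applying Lemma~\ref{lem:Lemma 2.8} with $D=X$, with the partition $\{F,G\}$ of $E$, and with $A=E_n$, one obtains
$$P(E_n,X)=P(E_n\cap F,X)+P(E_n\cap G,X)=P(F_n,X)+P(G_n,X).$$

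To finish, I would use that $E_n\nearrow E$ together with $F,G\subset E$, so that $F_n\nearrow F$ and $G_n\nearrow G$. Since the restriction of $\frm$ to Borel sets is countably additive, monotone convergence yields $\frm(F_n)\nearrow\frm(F)>0$ and $\frm(G_n)\nearrow\frm(G)>0$. Fixing any $n$ large enough that both $\frm(F_n)$ and $\frm(G_n)$ are strictly positive, the pair $\{F_n,G_n\}$ witnesses that $E_n$ is decomposable, contradicting the hypothesis.

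I do not foresee a real obstacle here: the substantive work has already been absorbed into Lemma~\ref{lem:Lemma 2.8} (and thus into Proposition~\ref{prop:main decomposition result} and the upper gradient arguments of Section~\ref{sec:main tools}). The remaining steps are routine set-theoretic and measure-theoretic bookkeeping: the Borelness of $F_n,G_n$ is automatic, their finite perimeter follows from Lemma~\ref{lem:BV functions form algebra}, and the positivity of their measures for large $n$ is just monotone convergence.
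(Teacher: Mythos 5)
Your proof is correct and is essentially the same argument the paper invokes: the paper states that its proof of Proposition~\ref{prop:stability} is verbatim the proof of \cite[Proposition 2.10]{BPR} with the isotropicity-free Lemma~\ref{lem:Lemma 2.8} substituted for its counterpart, and that proof is exactly your contradiction argument --- transfer a hypothetical decomposition $\{F,G\}$ of $E$ down to $E_n$ via Lemma~\ref{lem:Lemma 2.8} (with $A=E_n$), then use continuity of $\frm$ from below to pick $n$ with $\frm(F\cap E_n)>0$ and $\frm(G\cap E_n)>0$, contradicting the indecomposability of $E_n$.
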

\begin{proof}
The proof is verbatim the same as that of \cite[Proposition 2.10]{BPR}, except that
instead of \cite[Lemma 2.8]{BPR} we apply Lemma \ref{lem:Lemma 2.8}.
\end{proof}

The following lemma is \cite[Lemma 2.11]{BPR}, except that there the sets $W',W$
were assumed to be Borel, and $(X,d,\frm)$ was assumed to be an isotropic PI space.

\begin{lemma}\label{lem:second lemma}
	Let $E\subset X$
	be a set of finite perimeter and let $W',W\subset X$ be open sets. Suppose that $E\subset W'\subset W$
	and that $E$ is indecomposable in $W'$. Then $E$ is indecomposable in $W$.
\end{lemma}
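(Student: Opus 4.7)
The plan is to argue by contradiction, reducing decomposability in $W$ directly to decomposability in $W'$ via Proposition \ref{prop:perimeter in small large set}. Suppose that $E$ is decomposable in $W$. Then since $E\subset W'\subset W$ forces $E\cap W = E$, there is a Borel partition $\{F,G\}$ of $E$ with $\frm(F),\frm(G)>0$ and
\[
P(E,W) = P(F,W) + P(G,W) < \infty.
\]
In particular, both $F$ and $G$ have finite perimeter.

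Next, I would invoke Proposition \ref{prop:perimeter in small large set} with the two open sets $W'\subset W$ and the two sets $F,G$; the hypotheses are all in place since $F$ and $G$ are disjoint (so $\frm(F\cap G)=0$), their perimeters in $W$ sum to $P(F\cup G,W)$, and $F\cup G = E$. The conclusion is
\[
P(E,W') = P(F\cup G,W') = P(F,W') + P(G,W').
\]
Since $P(F,W)<\infty$ and $P(G,W)<\infty$, the monotonicity of total variation gives $P(F,W')<\infty$ and $P(G,W')<\infty$, so $F$ and $G$ are sets of finite perimeter in the sense required. Because $E\subset W'$, the pair $\{F,G\}$ partitions $E\cap W'$ into Borel sets of positive measure whose perimeters in $W'$ add up to $P(E,W')$. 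Hence $E$ is decomposable in $W'$, contradicting the hypothesis.

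There is no real obstacle here beyond invoking Proposition \ref{prop:perimeter in small large set}; the statement of that proposition was tailored exactly for this type of localization argument, and the only thing to verify is the bookkeeping that $E\cap W = E\cap W' = E$ and that finite perimeter in $W$ descends to finite perimeter in $W'$.
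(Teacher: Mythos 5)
Your proof is correct and follows essentially the same route as the paper: argue by contradiction, take a decomposing partition $\{F,G\}$ of $E$ in $W$, and apply Proposition \ref{prop:perimeter in small large set} with $W'\subset W$ to transfer the additive decomposition down to $W'$. The extra bookkeeping you include (noting $E\cap W=E\cap W'=E$ and that finiteness of perimeter descends to the smaller open set) is harmless and just makes explicit what the paper leaves implicit.
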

\begin{proof}
If $E$ is decomposable in $W$, then there exists a Borel partition
$\{F,G\}$ of $E$ such that $\frm(F)>0$ and $\frm(G)>0$, and
\[
P(E,W)=P(F,W)+P(G,W).
\]
Now by Proposition \ref{prop:perimeter in small large set}, we get
\[
P(E,W')=P(F,W')+P(G,W'),
\]
and thus $E$ is decomposable in $W'$.
\end{proof}

Next, \cite[Proposition 2.13]{BPR} also holds without isotropicity
(though the PI space assumption is now needed),
since this condition is used in the proof only via the application of \cite[Corollary 2.9]{BPR}. 

\begin{proof}[Proof of Theorem \ref{thm:main decomposition}]
We can follow the proof of \cite[Theorem 2.14]{BPR} almost verbatim,
since isotropicity is only used via the
application of \cite[Proposition 2.13]{BPR},
\cite[Lemma 2.11]{BPR} in open sets (given by Lemma \ref{lem:second lemma}),
and finally \cite[Proposition 2.10]{BPR} (Proposition \ref{prop:stability}).
\end{proof}

\begin{remark}
Example 2.16 in \cite{BPR} shows that in Theorem \ref{thm:main decomposition}, the assumption of a $(1,1)$-Poincar\'e inequality cannot be removed or even
replaced by any $(1,p)$-Poincar\'e inequality with $p>1$.
In this sense, our assumption that $(X,d,\frm)$ is a PI space is close to optimal.
\end{remark}

\begin{definition}
Let $(X,d,\frm)$ be a PI space. Let $E\subset X$ be a set of finite perimeter.
Then we denote by
\[
\CC^e(E):=\{E_i\}_{i\in I}
\]
the decomposition of $E$ given by Theorem \ref{thm:main decomposition}.
(By relabelling, we can assume that either
$I=\{1,\ldots,n\}$ or $I=\N$.)
The sets $E_i$ are called the \emph{essential connected components} of $E$.
\end{definition}

Note that with the above notation,
for any index set $J\subset I$ we have
\begin{align*}
P(E,X)
&\le P\Big(\bigcup_{i\in J}E_i,X\Big)
+P\Big(\bigcup_{i\in I\setminus J}E_i,X\Big)\quad
\textrm{by Lemma }\ref{lem:BV functions form algebra}\\
&\le \sum_{i\in J}P(E_i,X)+P\Big(\bigcup_{i\in I\setminus J}E_i,X\Big)
\quad\textrm{by }\eqref{eq:countable subadditivity}\\
&\le \sum_{i\in I}P(E_i,X)\quad\textrm{by }\eqref{eq:countable subadditivity}\\
&= P(E,X),
\end{align*}
and hence necessarily
\begin{equation}\label{eq:components subset}
P\Big(\bigcup_{i\in J}E_i,X\Big)=\sum_{i\in J}P(E_i,X).
\end{equation}
For any disjoint index sets $J_1,J_2\subset I$, this implies
\begin{equation}\label{eq:one component and the rest}
P\Big(\bigcup_{i\in J_1\cup J_2}E_i,X\Big)=P\Big(\bigcup_{i\in J_1}E_i,X\Big)+P\Big(\bigcup_{i\in J_2}E_i,X\Big).
\end{equation}

\begin{proposition}\label{prop:adding complement components}
	Let $(X,d,\frm)$ be a PI space.
	Let $E\subset X$ be an indecomposable set, and let $\CC^e(X\setminus E)=\{G_i\}_{i\in I}$.
	Then $E\cup \bigcup_{i\in J}G_i$ for any $J\subset I$ is also indecomposable, with
	\[
	P\Big(E\cup \bigcup_{i\in J}G_i,X\Big)\le P(E,X).
	\]
\end{proposition}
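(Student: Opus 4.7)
The plan is two-part. The perimeter bound will follow from passing to the complement $X\setminus F = \bigcup_{i\in I\setminus J} G_i$ ($\frm$-a.e., where $F := E\cup\bigcup_{i\in J} G_i$), using $P(F, X) = P(X\setminus F, X)$, and invoking the additivity \eqref{eq:components subset} applied to the decomposition $\CC^e(X\setminus E) = \{G_i\}_{i\in I}$ supplied by Theorem \ref{thm:main decomposition}; this yields
\[
P(F, X) = \sum_{i\in I\setminus J} P(G_i, X) \le \sum_{i\in I} P(G_i, X) = P(X\setminus E, X) = P(E, X).
\]

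The real work is indecomposability. I will argue by contradiction: suppose $F$ admits a Borel partition $\{A, B\}$ with $\frm(A), \frm(B) > 0$ and $P(F, X) = P(A, X) + P(B, X)$ (both $A, B$ have finite perimeter by Lemma \ref{lem:BV functions form algebra}). Applying Lemma \ref{lem:Lemma 2.8} to this partition and testing it against the subsets $E\subset F$ and $G_i\subset F$ ($i\in J$), the indecomposability of $E$ forces (say) $\frm(E\cap B)=0$, and the indecomposability of each $G_i$ forces exactly one of $\frm(G_i\cap A), \frm(G_i\cap B)$ to vanish. Writing $J = J_A\sqcup J_B$ according to which side each $G_i$ falls on, this upgrades to $A = E\cup\bigcup_{i\in J_A} G_i$ and $B = \bigcup_{i\in J_B} G_i$ $\frm$-a.e., and $\frm(B) > 0$ forces $J_B\neq\emptyset$.

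Then I will compute $P(A, X)$ through the complement $X\setminus A = \bigcup_{i\in I\setminus J_A} G_i$ via \eqref{eq:components subset}, obtaining $P(A, X) = P(F, X) + P(B, X)$; combined with the standing hypothesis $P(A, X) + P(B, X) = P(F, X)$, this forces $P(B, X) = 0$, and \eqref{eq:components subset} then yields $P(G_i, X) = 0$ for every $i\in J_B$. The main obstacle will be ruling this last equality out: I will invoke the standard fact that in a (quasiconvex, hence connected) PI space, a Borel set with zero perimeter is $\frm$-null or $\frm$-co-null, proved by applying the $(1,1)$-Poincar\'e inequality on balls to any optimal approximating Lipschitz sequence and propagating constancy across the connected space. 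Since each $G_i$ satisfies $0 < \frm(G_i)$ and $\frm(X\setminus G_i) \ge \frm(E) > 0$, the vanishing $P(G_i, X) = 0$ is impossible, and $J_B \neq \emptyset$ delivers the required contradiction.
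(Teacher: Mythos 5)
Your proposal is correct, and it is structurally different from the paper's proof. The paper argues by induction: it first shows that $E\cup G_{i_1}$ is indecomposable by a contradiction argument that invokes the uniqueness and maximality in Theorem \ref{thm:main decomposition}, then identifies $\CC^e(X\setminus(E\cup G_{i_1}))=\{G_i\}_{i\in I\setminus\{i_1\}}$ and repeats; finally, the case of infinite $J$ is handled separately by an exhaustion using Proposition \ref{prop:stability} and lower semicontinuity. You instead run a single, uniform contradiction argument for arbitrary $J$: given a nontrivial splitting $\{A,B\}$ of $F$, Lemma \ref{lem:Lemma 2.8} forces each of the indecomposable building blocks $E$ and $G_i$ ($i\in J$) to lie entirely in $A$ or entirely in $B$, after which the complement identities together with \eqref{eq:components subset} compute every perimeter in sight and yield $P(B,X)=0$, hence $P(G_i,X)=0$ for each $i\in J_B$. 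This avoids both the induction and the separate limiting step and is in that sense shorter and more transparent; it trades this for a slightly heavier direct use of Lemma \ref{lem:Lemma 2.8}. Both proofs end by ruling out a component of zero perimeter: the paper does this tacitly ("$P(G_{i_1},X)=0$, a contradiction"), while you spell out the underlying fact that in a connected PI space a Borel set with zero perimeter is null or co-null, and you verify $\frm(G_i)>0$ and $\frm(X\setminus G_i)>0$; the latter is available because either $\frm(E)>0$ or $J_A\ne\emptyset$ (since $\frm(A)>0$), so $X\setminus G_i$ always contains a piece of positive measure. Your perimeter bound via $P(F,X)=P(X\setminus F,X)=\sum_{i\in I\setminus J}P(G_i,X)\le P(E,X)$ is likewise correct and is essentially the same computation the paper performs one component at a time.
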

\begin{proof}
	Fix an index set $J\subset I$.
	We have either $J=\{i_1,\ldots,i_n\}$ or a sequence $J=\{i_1,i_2,\ldots\}$.
	First consider $G_{i_1}$.
	By Lemma \ref{lem:BV functions form algebra},
	$E\cup G_{i_1}$ has finite perimeter. Suppose $E\cup G_{i_1}$ is decomposable.
	Since $E$ and $G_{i_1}$ are indecomposable sets, necessarily one essential connected component
	(by its maximality property) of $E\cup G_{i_1}$ contains $E$ and another contains $G_{i_1}$
	(up to $\frm$-negligible sets).
	Thus the essential connected components of $E\cup G_{i_1}$ are $E$ and $G_{i_1}$, and so
	\begin{equation}\label{eq:E and G1}
	P(E\cup G_{i_1},X)=P(E,X)+P(G_{i_1},X).
	\end{equation}
	%Denote $F:=X\setminus (E\cup G_{i_1})$.
	Now since $X\setminus E=G_{i_1}\cup \bigcup_{i\in I\setminus \{i_1\}}G_i$, we get
	\begin{align*}
	P(E,X)
	&= P\Big(G_{i_1}\cup \bigcup_{i\in I\setminus \{i_1\}}G_i,X\Big)\\
	&= P(G_{i_1},X)+P\Big(\bigcup_{i\in I\setminus \{i_1\}}G_i,X\Big)\quad\textrm{by }\eqref{eq:one component and the rest}\\
	&= P(G_{i_1},X)+P(E\cup G_{i_1},X)\\
	&= 2P(G_{i_1},X)+P(E,X)\quad\textrm{by }\eqref{eq:E and G1},
	\end{align*}
	implying $P(G_{i_1},X)=0$, a contradiction.
	Thus $E\cup G_{i_1}$ is indecomposable.
	Also,
	\begin{align*}
	P\Big(E\cup G_{i_1},X\Big)
	&=P\Big(\bigcup_{i\in I\setminus \{i_1\}}G_i,X\Big)\\
	&\le\sum_{i\in I\setminus \{i_1\}} P(G_i,X)\quad\textrm{by }\eqref{eq:countable subadditivity}\\
	&\le\sum_{i\in I} P(G_i,X)\\
	&=P\Big(\bigcup_{i\in I}G_i,X\Big)\\
	&=P(E,X).
	\end{align*}
	
	Now clearly
	$\{G_i\}_{i\in I\setminus \{i_1\}}$ is a partition of $\bigcup_{i\in I\setminus \{i_1\}}G_i$ into
	indecomposable subsets of $X$ with nonzero $\frm$-measure, and \eqref{eq:components subset} gives
	\[
	P\Big(\bigcup_{i\in I\setminus \{i_1\}}G_i,X\Big)=\sum_{i\in I\setminus \{i_1\}}P(G_i,X).
	\]
	Since such a decomposition is unique by
	Theorem \ref{thm:main decomposition}, necessarily
	$\CC^e(X\setminus (E\cup G_{i_1}))=\{G_{i}\}_{i\in I\setminus \{i_1\}}$.
	Thus we can repeat the first step and inductively obtain the result for any finite index set 
	$J$. Finally, if $J$ is infinite, the result is obtained by Proposition \ref{prop:stability}
	and the lower semicontinuity of perimeter.
\end{proof}

\begin{definition}
Let $(X,d,\frm)$ be a PI space such that $\frm(X)=\infty$. Let $E\subset X$ be an indecomposable set.
Then any essential connected component of $X\setminus E$ with finite $\frm$-measure is 
a \emph{hole} of $E$.
\end{definition}

\begin{definition}
Let $(X,d,\frm)$ be a PI space such that $\frm(X)=\infty$. Given an indecomposable set $E\subset X$,
we define its \emph{saturation} $\sat(E)$ as the union of $E$ and its holes.
We say that $E$ is \emph{saturated} provided that $\frm(E\Delta \sat(E))=0$.
\end{definition}

\begin{proposition}
Let $(X,d,\frm)$ be a PI space such that $\frm(X)=\infty$. Let $E\subset X$ be an indecomposable set.
Then the following properties hold:
\begin{enumerate}[(i)]
	\item Any hole of $E$ is saturated.
	\item The set $\sat(E)$ is indecomposable and saturated. In particular,
	$\sat(\sat(E))=\sat(E)$.
	\item It holds that $\mathcal H(\partial^*\sat(E)\setminus\partial^*E)=0$.
	In particular, $P(\sat(E),X)\le P(E,X)$.
	\item If $F\subset X$ is an indecomposable set with $\frm(E\setminus \sat (F))=0$,
	then $\frm(\sat(E)\setminus \sat(F))=0$.
\end{enumerate}
\end{proposition}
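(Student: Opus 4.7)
My plan is to fix $\CC^e(X\setminus E)=\{G_i\}_{i\in I}$ throughout and let $J\subset I$ index the holes of $E$; Proposition \ref{prop:adding complement components} together with the uniqueness in Theorem \ref{thm:main decomposition} essentially handles (i) and (ii). For (i), fix a hole $G_{i_0}$ ($i_0\in J$). Applying Proposition \ref{prop:adding complement components} with index set $I\setminus\{i_0\}$ shows that $E\cup\bigcup_{i\ne i_0}G_i$, which coincides with $X\setminus G_{i_0}$ up to an $\frm$-null set, is indecomposable; since $\frm(X)=\infty$ while $\frm(G_{i_0})<\infty$, this set has infinite $\frm$-measure, so by uniqueness it is the unique member of $\CC^e(X\setminus G_{i_0})$. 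Hence $G_{i_0}$ has no holes and is saturated. For (ii), the same proposition with index set $J$ gives that $\sat(E)=E\cup\bigcup_{i\in J}G_i$ is indecomposable. To identify $\CC^e(X\setminus\sat(E))$, note that $\{G_i\}_{i\in I\setminus J}$ partitions $X\setminus\sat(E)$ up to $\frm$-null sets into indecomposable sets of positive $\frm$-measure, and \eqref{eq:components subset} gives $P\bigl(\bigcup_{i\in I\setminus J}G_i,X\bigr)=\sum_{i\in I\setminus J}P(G_i,X)$, so by uniqueness this is the essential decomposition. Each such $G_i$ has infinite $\frm$-measure, so $\sat(E)$ has no holes and $\sat(\sat(E))=\sat(E)$.

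The crux is (iii). Let $F:=\bigcup_{i\in J}G_i$ and $G':=\bigcup_{i\in I\setminus J}G_i$; these are $\frm$-disjoint, their union equals $X\setminus E$ modulo $\frm$-null sets, and \eqref{eq:one component and the rest} gives $P(X\setminus E,X)=P(F,X)+P(G',X)$. For an arbitrary Borel set $D\subset X$, Lemma \ref{lem:BV functions form algebra} together with $P(F\cap G',D)=0$ (as $\frm(F\cap G')=0$) yields $P(X\setminus E,D)\le P(F,D)+P(G',D)$. Since both sides are finite Borel outer measures on $X$ that agree on $X$, they agree on every Borel set. Consequently
\[
P(\sat(E),D)=P(X\setminus\sat(E),D)=P(G',D)\le P(F,D)+P(G',D)=P(X\setminus E,D)=P(E,D)
\]
for every Borel $D$; setting $D=X$ already yields $P(\sat(E),X)\le P(E,X)$. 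Specializing instead to $D:=X\setminus\partial^*E$, the representation \eqref{eq:def of theta} gives $P(E,D)=0$, whence
\[
0=P(\sat(E),D)=\int_{\partial^*\sat(E)\setminus\partial^*E}\theta_{\sat(E)}\,d\mathcal{H}\ge\alpha\,\mathcal{H}(\partial^*\sat(E)\setminus\partial^*E),
\]
establishing the Hausdorff claim.

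For (iv), write $\CC^e(X\setminus F)=\{H_j\}_{j\in K}$ with $K_0\subset K$ indexing the holes of $F$. The hypothesis $\frm(E\setminus\sat(F))=0$ amounts to saying that $X\setminus\sat(F)=\bigcup_{j\in K\setminus K_0}H_j$ is $\frm$-a.e.\ contained in $X\setminus E$. Each $H_j$ with $j\in K\setminus K_0$ is thus an indecomposable Borel set of finite perimeter with $\frm(H_j\setminus(X\setminus E))=0$, so the maximality assertion in Theorem \ref{thm:main decomposition} (applied to $X\setminus E$, after replacing $H_j$ by $H_j\cap(X\setminus E)$ if one insists on a strict containment) supplies $i(j)\in I$ with $\frm(H_j\setminus G_{i(j)})=0$; since $\frm(H_j)=\infty$, necessarily $i(j)\in I\setminus J$. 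Hence $X\setminus\sat(F)\subset\bigcup_{i\in I\setminus J}G_i=X\setminus\sat(E)$ modulo $\frm$-null sets, and taking complements gives $\frm(\sat(E)\setminus\sat(F))=0$. The main obstacle throughout is (iii): isotropicity would have rendered the comparison between $\theta_{\sat(E)}$ and $\theta_E$ on the common boundary immediate, and the point here is to sidestep the pointwise comparison by noting that the Borel measures $P(X\setminus E,\cdot)$ and $P(F,\cdot)+P(G',\cdot)$ agree on $X$ and hence everywhere, which concentrates $P(\sat(E),\cdot)$ on $\partial^*E$ up to $\mathcal{H}$-null sets.
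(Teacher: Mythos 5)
Your proof is correct, and it reconstructs in full the argument that the paper delegates to \cite[Proposition 3.12]{BPR} with the indicated substitutions. Parts (i), (ii), and (iv) use exactly the tools the paper points to: Proposition \ref{prop:adding complement components}, the uniqueness and maximality clauses of Theorem \ref{thm:main decomposition}, and the identities \eqref{eq:components subset}--\eqref{eq:one component and the rest}; the small technicality you flag in (iv) (replacing $H_j$ by $H_j\cap(X\setminus E)$ to satisfy the literal hypothesis $F\subset E$ of the maximality statement) is handled correctly, since perimeter and indecomposability are invariant under modification by $\frm$-null sets.

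The one place where your route is arguably a bit different in flavour is (iii). The paper says to invoke \cite[Remark 2.17]{BPR} (observing it holds without isotropicity) for the Hausdorff-measure claim. You instead argue directly that the two Borel outer measures $P(X\setminus E,\cdot)$ and $P(F,\cdot)+P(G',\cdot)$, with $F=\bigcup_{i\in J}G_i$ and $G'=\bigcup_{i\in I\setminus J}G_i$, satisfy $P(X\setminus E,\cdot)\le P(F,\cdot)+P(G',\cdot)$ by Lemma \ref{lem:BV functions form algebra}, are finite, and agree at $D=X$ by \eqref{eq:one component and the rest}, hence agree on every Borel set; specializing to $D=X\setminus\partial^*E$ and using the lower bound $\theta_{\sat(E)}\ge\alpha$ from \eqref{eq:def of theta} then kills $\mathcal H(\partial^*\sat(E)\setminus\partial^*E)$. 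This is clean, self-contained, and correct, and it sidesteps any pointwise comparison of the density functions $\theta$, which is precisely what isotropicity would have provided. Whether or not it coincides verbatim with the BPR remark, it achieves the same end by the same underlying observation — that perimeter measure additivity at the level of $X$ forces additivity at the level of Borel sets — so overall you follow the paper's intended path.
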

\begin{proof}
We can follow the proof of \cite[Proposition 3.12]{BPR} almost verbatim, with the following changes.
In the proof of (i), we need to apply Proposition \ref{prop:adding complement components} in place of
\cite[Proposition 2.18, Proposition 2.10]{BPR}.
In the proof of (iii), we note that
\cite[Remark 2.17]{BPR} is true also without the assumption of isotropicity,
and we get the second claim of (iii) from Proposition \ref{prop:adding complement components}.

\end{proof}

\noindent Address:\\

\noindent Academy of Mathematics and Systems Science,\\
Chinese Academy of Sciences,\\
Beijing 100190, PR China\\
E-mail: {\tt panulahti@amss.ac.cn}

\end{document}